
\documentclass{amsart}
\usepackage[T2A]{fontenc}
\usepackage[russian,english]{babel}
\textwidth 135mm \textheight 220mm
\oddsidemargin 10mm \evensidemargin 10mm \baselineskip+6pt


\pagestyle{myheadings}

\thispagestyle{empty}
\markboth{\small{R. Ashurov and O. Mukhiddinova}}{\small}

\newtheorem{теор}{Тheorem}[section]
\newtheorem{лем}[теор]{Lemma}
\newtheorem{зам}[теор]{Remark}

\newtheorem{анн}[теор]{Annotation}
\newtheorem{опр}[теор]{Definition}
\newtheorem{ключ}[теор]{ Keywords.}

\newtheorem{thm}{Theorem}[section]

\begin{document}

\begin{center}
\textbf{{\large {\ 
Coefficient Identification Problem with Integral Overdetermination Condition for Diffusion Equations}}}\\[0pt]
\medskip \textbf{R.R. Ashurov$^{1,2}$ and O.T. Mukhiddinova$^{3,1}$}\\[0pt]
\textit{ashurovr@gmail.com, oqila1992@mail.ru \\[0pt]}

\smallskip
\textit{{\it $^1$V.I. Romanovskiy Institute of Mathematics, Uzbekistan Academy of Science, University str.,9, Olmazor district, Tashkent, 100174, Uzbekistan;\\
$^2$ Central Asian University, 264 Milliy Bog Street, Barkamol MFY, Mirzo Ulugbek District, Tashkent 111221, Uzbekistan; \\
$^3$ Tashkent University of Information Technologies named after Muhammad al-Khwarizmi,
108 Amir Temur Avenue,Tashkent, 100200, Uzbekistan; }}\\

\end{center}

\begin{анн}
In this paper, we investigate a nonlinear inverse problem aimed at recovering a coefficient $a(t, x)$, dependent on both time and a subset of spatial variables, in a diffusion equation \( u_t - \Delta_x u -  u_{yy} +a(t, x) u = f(t,x,y) \), using an additional measurement given as an integral over the spatial domain. Here \(x \in G \subset \mathbb{R}^m\) and \(y \in (0, \pi)\). We establish theorems on the existence and uniqueness of both local and global weak solutions. Furthermore, we demonstrate that, under sufficient smoothness of the problem data, there exists a uniquely determined strong solution (both local and global) to the inverse problem. Our approach combines the Fourier method with a priori estimates. Previous studies have addressed similar inverse problems for parabolic equations defined over the entire space.
\end{анн}

\begin{ключ}
Diffusion equation, inverse problem of coefficient identification,  uniqueness and existence of solution, weak, strong, local and global solutions, Fourier method.
\end{ключ}
\section{Introduction}

Let $ Q=(0,T) \times G\times  (0, \pi)$
and
$ D= G \times (0, \pi)$, where $G\subset \mathbb{R}^m$, $m\leq 3$, is a bounded domain, with a sufficiently smooth boundary. Let $G_T = (0,T) \times G$.
Consider the following initial - boundary value problem
\begin{equation} \label{1}
	\begin{cases}
		 u_t - \Delta_x u -  u_{yy} +a(t, x) u = f(t,x,y) , \quad (t,x,y) \in Q, \\
        
         u(0,x,y) = \varphi(x,y), \quad (x,y) \in D, \\
         
         u(t,x,y) =  0,\quad t\in [0,T], \quad (x, y)\in \partial D.
	\end{cases}
\end{equation} 
 Here $\Delta_x$ is the Laplace operator in variables $x$, \( f \) and \( \varphi \) are given functions $u_t:=D_tu =\frac{\partial u}{\partial t}$. In what follows we will denote the operators $\Delta_x$ and $\nabla_x$ as $\Delta$ and $\nabla$; it is clear that this will not cause confusion.
 
In equation (\ref{1}), the coefficient \( a = a(t,x) \) is assumed to depend on time \( t \) and the spatial variables \( x \), but not on the spatial component \( y \). This reflects a physical property that may vary with time and the spatial coordinates \( x \), while remaining independent of the depth variable \( y \), for example. If the function \(a(t, x)\) is given, then problem (\ref{1}) is called \textit{the forward problem} and its solution exists and is unique under certain conditions on the problem’s data (see, e.g., \cite{Ladijenskaya}, Chapter 3,  \cite{AMux2}, \cite{AMux}).

Now we will assume that the coefficient \( a(t, x) \) of $u$ is unknown and must be determined. The main goal of this paper is to study \textit{the inverse problem }of determining a pair of functions \( \{ u(t, x, y), a(t, x) \} \) under the overdetermination condition specified by the following integral measurement:
\begin{equation} \label{overdetermination}
\int_0^\pi u(t, x, y) \omega(y) dy = \psi(t, x), \,\, \psi(t, x)\neq 0,\quad t \in [0, T], \quad x \in G,
\end{equation}
where $\omega$ and \( \psi \) are known functions, the conditions on which we will determine later.

Let us mention that the recovery of time-dependent and time and spatial-dependent parameters appearing in the diffusion
equation (\ref{1}) have been considered by several authors (see, for example, \cite{Kaban}). The problem of coefficient recovery is related to ill-posed problems for equations of mathematical physics, the main methods for solving which were developed in the works of A. N. Tikhonov, M. M. Lavrentiev, A. I. Prilepko, V. G. Romanov (see, for example, \cite{1.1}).

Let us dwell only on some works that consider problems close to our inverse problem (\ref{1}) - (\ref{overdetermination}). A series of relatively old works by N. Ya. Beznoschenko \cite{4} - \cite{6} is devoted to the study of a similar problem, where the spatial region consists of the unbounded domain \( \mathbb{R}^{m+1}_+ = \{ (x, y) \in \mathbb{R}^{m+1} : y > 0 \} \). The key objective is to establish sufficient conditions for the \textit{global existence} of a solution to the inverse problem of determining the unknown coefficient \( a(x,t) \) in equation (\ref{1}) with the overdetermination condition \(u(t, x, 0) = \psi (t, x)\). The methodology is based on reducing the inverse problem to a well-posed direct problem, establishing a priori estimates and fixed-point theorems. 

In the paper \cite{7} the authors study a class of inverse problems, in particular, on the restoration of the coefficient $a(t, x)$ in the parabolic equation (\ref{1}), paying special attention to the determination of unknown functions that do not depend on one spatial variable. Taking the value of the solution to the forward problem at $y=0$ as the overdetermination condition, the authors managed to prove the stability of the solution to the inverse problem. To prove this result, the authors established new the Carleman estimates adapted to parabolic equations, where the unknown coefficient does not depend on one spatial component. The key innovation is the construction of the Carleman weight functions that effectively take into account spatial invariance, allowing precise control of the analysis errors.

The authors of \cite{10} investigated a similar inverse problem using a different method and proved the stability of the solution. Namely, the authors first studied the inverse problem of determining a source dependent on both time and space variables for fractional and classical diffusion equations in a cylindrical domain from boundary dimensions. Under suitable boundary conditions, they proved that a certain class of sources independent of one spatial direction can be reconstructed from boundary dimensions. The authors also establish some results on Lipschitz stability for source reconstruction, which they then apply to prove the stability of the coefficient $a(t, x)$.

In the paper \cite{9} the inverse problem of identifying coefficients for a linear parabolic partial differential equation is considered. The main attention is paid to the simultaneous determination of three unknown coefficients: the coefficient of the time derivative and two lower coefficients. The unknown functions depend on \((t, x)\), and the equation is considered in a two-dimensional space $x\in \mathbb{R}$, $y\in \mathbb{R}$. In proving the theorem of the existence and uniqueness of the solution to the inverse problem, a method is used that allows, using overdetermination conditions, to reduce the original inverse problem to a direct problem for a loaded equation (containing traces of unknown functions and their derivatives). The correctness of the resulting direct problem is studied using the weak approximation method.

In conclusion, we note a very interesting work by A. I. Kozhanov \cite{8}, where the inverse problem of restoring the potential $a(t, x, y)$ which depends on both $t$ and all spatial variables is considered. The overdetermination condition has the form $u(T, x, y) = \psi(x, y)$. There is an interesting additional condition $a_t(t, x, y) + \mu (t, x, y) a (t, x, y) =0$. Note that if $\mu =0$, then this condition means that $a$ does not depend on $t$. For proof, the indicated problems are reduced to nonlocal boundary value problems for nonlinear equations of composite type; the existence of a solution to the latter (which is proved in the work) gives the existence of a solution to the original inverse problems.

This work consists of eight sections. The next section is auxiliary, which presents the Poincaré inequalities, Sobolev embedding theorems, and the definition of the degree of a self-adjoint positive operator. In Section 3, we define a weak solution to the inverse problem and formulate the main results. The solution to the inverse problem is sought in the form of the Fourier series with unknown coefficients, and in Section 4, an infinite system of integro-differential equations is derived to determine these coefficients and a priori estimates are established. Sections 5 and 6 prove the main results of the work. Section 7 is devoted to the study of a strong solution to the problem. Finally, Section 8 provides a conclusion.

\section{Preliminaries}

\subsection{The Poincaré type inequality}
First we  prove the Poincaré type inequality.
\begin{лем}\label{Poincaré} Let a function \( g \) defined on \([0, T]\) belongs to $W_2^1(0, T)$. Then
\[
\int_0^T g(s)^2 \, ds \leq T^2 \int_0^T (g'(s))^2 ds + 2\, T\, g^2 (0).
\]

\end{лем}
\begin{proof}Note that \( g \in C[0, T] \) and $g'$ is an integrable function. Then we can write:
   \[
   g(t) = \int_0^t g'(s) \, ds  + g(0).
   \]
   Consider the square of \( g(x) \) (note that $(a+b)^2 \leq  2\, a^2 + 2 \, b^2$):
   \[
   g(t)^2 \leq 2\, \left( \int_0^t g'(s) \, ds \right)^2 +2 \, g^2 (0).
   \]
   By the Cauchy-Schwarz inequality for integrals:
   \[
   \left( \int_0^t g'(s) \cdot 1 \, ds \right)^2 \leq \left( \int_0^t (g'(s))^2 \, ds \right) \left( \int_0^t 1^2 \, ds \right).
   \]
   Then   \[
   g(t)^2 \leq 2\, \left( \int_0^t (g'(s))^2 \, ds \right) \cdot t  +2 \, g^2 (0).
   \]
   Note that \( \int_0^t (g'(s))^2 \, ds \leq \int_0^T (g'(s))^2 \, ds \), since \( t \leq T \), so:
   \[
   g(t)^2 \leq 2\,  t \int_0^T (g'(s))^2 \, ds  +2 \, g^2(0).
   \]
Integrate both sides of this inequality  over \([0, T]\):
   \[
   \int_0^T g(t)^2 \, dt \leq T^2\, \int_0^T (g'(t))^2 \, dt + 2\,  T g^2 (0).
  \]
\end{proof}

\subsection{Poincaré Inequality with Dirichlet Boundary Conditions}Let \( W_2^\ell (\Omega) \) denote the Sobolev space, where $\Omega\subset \mathbb{R}^N$, $N\geq 1$, is a bounded domain with the smooth boundary, $\ell$ is a positive real number. Then, the symbol \( \dot{W}_2^\ell(\Omega) \) represents the closure of the set \( C_0^\infty(\Omega) \) with respect to the norm of \( W_2^\ell(\Omega) \). Let \( B \) be a Banach space. We denote by \( L_2(0, T; B) \) the space of functions that are in \( L_2(0, T) \) and take values in \( B \). The spaces \( C(0, T; B) \) and \( W_2^\ell(\Omega; B) \) are defined similarly. 

\begin{зам}\label{norms}Let $G$ and $G_T$ are the domains as defined above. The norm in the space \( W_2^\ell(G; \dot{W}_2^\tau(0, \pi)) \) is denoted by \( \|\cdot\|_{\ell, \tau} \) and in the space  \( L_2(G_T; \dot{W}_2^\tau(0, \pi)) \) is denoted by \( \|\cdot\|_{G_T, \tau} \).
\end{зам}

For functions \( v \in \dot{W}_2^1(\Omega) \), i.e., functions in the Sobolev space with zero trace on the boundary \( \partial \Omega \), the Poincaré inequality takes the form (see, e.g, \cite{Evans}, p. 289):
\begin{equation}\label{DirixlePoincaré}
\int_{\Omega} |v(x)|^2 \, dx \leq C_P \int_{\Omega} |\nabla v(x)|^2 \, dx.    
\end{equation}

The optimal constant \( C_P \) in this inequality is determined as the reciprocal of the first eigenvalue of the Laplace operator \( -\Delta \) with Dirichlet boundary conditions:
\[
-\Delta v = \lambda v \quad \text{in } \Omega, \quad v = 0 \text{ on } \partial \Omega.
\]
The first eigenvalue \( \lambda_1 \) is found as the minimum of the Rayleigh quotient (see \cite{Evans}, p. 342):
\[
\lambda_1 = \inf_{v \in \dot{W}_2^1(\Omega), \, v \neq 0} \frac{\int_{\Omega} |\nabla v(x)|^2 \, dx}{\int_{\Omega} |v(x)|^2 \, dx}.
\]
Thus, the optimal constant is:
\[
C_P = \frac{1}{\lambda_1}.
\]
If the measure \( |\Omega| \) becomes very small, this corresponds to a reduction in the scale of the domain, which increases \( \lambda_1 \). Indeed, for an arbitrary bounded domain \( \Omega \) with a smooth boundary, the first eigenvalue \( \lambda_1 \) scales inversely proportional to the square of the characteristic size of the domain.

If the measure \( |\Omega| \to 0 \), this is equivalent to shrinking the domain. For a domain \( \Omega_\epsilon \), obtained from \( \Omega \) by a homothety with coefficient \( \epsilon \), the measure is:
\[
|\Omega_\epsilon| = \epsilon^N |\Omega|,
\]
and the eigenvalue scales as:
\[
\lambda_1(\Omega_\epsilon) = \frac{\lambda_1(\Omega)}{\epsilon^2}.
\]
Then:
\[
C_P(\Omega_\epsilon) = \frac{1}{\lambda_1(\Omega_\epsilon)} = \frac{\epsilon^2}{\lambda_1(\Omega)}.
\]
Since \( |\Omega_\epsilon| = \epsilon^N |\Omega| \), we have \( \epsilon = \left( \frac{|\Omega_\epsilon|}{|\Omega|} \right)^{1/N} \), and:
\[
C_P(\Omega_\epsilon) = \frac{1}{\lambda_1(\Omega)} \left( \frac{|\Omega_\epsilon|}{|\Omega|} \right)^{2/N}.
\]
As \( |\Omega_\epsilon| \to 0 \), \( \epsilon \to 0 \), and:
\[
C_P(\Omega_\epsilon) \to 0.
\]
\begin{зам}\label{optimalC}
    Using the Rayleigh quotient, it can be shown that if \( |\Omega| \to 0 \), then \( \lambda_1 \to \infty \) and \( C_P \to 0 \). Thus, the constant \( C_P \) can be made arbitrarily small if the measure of the region \( |\Omega| \) tends to zero.
\end{зам}
\subsection{Sobolev's theorem}
    The Sobolev embedding theorem states that for a bounded domain $ \Omega \subset \mathbb{R}^N $ with a sufficiently smooth boundary (e.g., Lipschitz or smoother), the Sobolev space $ W_p^k(G) $ embeds continuously into $ L_q(\Omega) $, i.e., $ W_p^k(\Omega) \hookrightarrow L_q(\Omega) $, provided the condition (see \cite{Brezis}, p. 278):
$$\frac{1}{q} \geq \frac{1}{p} - \frac{k}{N}$$
is satisfied. So the Sobolev inequality 
\begin{equation}\label{Sobolev}
    \|v\|_{L_4(G_T)} \leq C_S \|v\|_{W_2^1(G_T)} 
 \end{equation}
 holds for an \( m \)-dimensional bounded domain \( G \) with a smooth boundary if:
\[
m \leq 3,
\]
with the constant $C_S$ depending on the geometry of the region $G$.
\begin{зам}\label{m}
The restriction on the dimension of the domain $G$ arose precisely in connection with the application of this estimate.
\end{зам}
\subsection{Operator degrees and the Cauchy inequality}
Let us denote by $L_0$ an operator $-Y''(y)$ with domain $D(L_0) = \{Y\in C^2(0, \pi) \cap C[0, \pi]: Y(0)= Y(\pi)=0\}$. The set of eigenfunctions of this operator has the form $\{ \sin k y\}$ and eigenvalues has the form $\lambda_k = k^2$. Let $L$ be a self-adjoint extension in $L_2(0, \pi)$ of $L_0$.  Then $L$ is a positive operator with domain $D(L) = W_2^2(0, \pi) \cap \dot{W}_2^1(0, \pi) $ (see, for example, \cite{Berezans’kii}, Chapter 2). Therefore, we can define fractional powers of this operator using the von Neumann theorem. Namely, let $\tau$ be an arbitrary non-negative number. Then
\[
L^\tau p(y) = \sum_{k=1}^\infty \lambda_k^\tau\,  p_k\,  \sin k y, \,\, D(L^\tau) = \{p\in L_2(\Omega):  \sum_{k=1}^\infty \lambda_k^{2\tau}\, |p_k|^2 < \infty\}.
\]
Here after, the symbol \( p_k \) denotes the Fourier coefficients of a function \( p(y) \in L_2(0, \pi) \) with respect to the system \( \{\sin k y\} \), defined as:
\begin{equation}\label{CoefFur}
    p_k = \frac{2}{\pi} \int_0^\pi v(y) \sin \sqrt{\lambda_k} y \, dy.
\end{equation}
The domain of definition $D(L^\tau)$ is well studied in the work D. Fujiwara \cite{Fujiwara}. In particular, it is proved:
$$
  D(L^\tau) = \dot{W}_2^{2\tau}(0, \pi), \,\, \text{if}\,\, \,\,\frac{1}{4}< \tau < \frac{1}{2},\,\, \text{or}, \,\, \frac{1}{2}< \tau < \frac{3}{4}.
$$

In conclusion, we note that we will often use the Cauchy inequality
with the constant $\delta>0$: 
\begin{equation}\label{Cauchy inequality}
    ab \leq \frac{\delta}{2} a^2 + \frac{1}{2 \delta} b^2, \,\ a, b >0.
    \end{equation}

\section{Definition of the Weak Solution and Formulation of the Main Result}

First, we find a formal representation for the unknown function $a$ using the overdetermination condition \eqref{overdetermination}. To do this, we multiply the equation in (\ref{1}) by the function $\omega$ and integrate over the domain $(0, \pi)$. Then
\begin{equation*}
   \psi_t - \Delta_x \psi - (u_{yy}, \omega) +a \psi  = (f(t, x, \cdot), \omega) .
\end{equation*}
Here and below, the symbol $(\cdot, \cdot)$ will denote the scalar product in $L_2(0, \pi)$, and the norm of this space will be denoted by $||\cdot||$.

Now we need the following conditions for the function $\omega$:
$$
    \omega \in \dot{W}^1_2(0, \pi)  \cap W^2_2(0, \pi)\,\,\text{and} \,\int_0^\pi \varphi(x,y) \omega(y) dy= \psi(0, x).
$$
Taking into account this property, we get $( u_{yy}, \omega) =  ( u, \omega'')$. 

Note, that \( \psi(t, x) \neq 0 \) for all \( (t, x) \in [0, T] \times G \).  Therefore the desired representation for \( a(t, x) \) can be written as:
\begin{equation}\label{7}
    a(t, x) := a(t, x, u)=\frac{-\psi_t(t, x) + \Delta_x \psi(t, x)+(f(t, x, \cdot), \omega) + (u, \omega'')}{\psi(t, x)}.
\end{equation}

Next, we will investigate the following weak formulation of the inverse problem \eqref{1}–\eqref{overdetermination}.

\begin{опр}\label{opr1} 
Find a pair of functions \( \{ u(t, x, y), a(t, x) \} \), where \( a(t, x) \) has the form \eqref{7}, and the function \( u(t, x, y) \) satisfies the following conditions:
\begin{enumerate}
    \item \( u \in C([0, T]; L_2(D)) \), \,\, \( u \in L_2(0, T; \dot{W}_2^1(D)) \);
    \item \( D_t u \in L_2(Q) \);
    \item \( u(0, x, y) = \varphi(x, y) \) a.e.\ in \( D \);
    \item For any \( v \in \dot{W}_2^1(D) \) and almost every \( t \in (0, T] \), the following equality holds:
\end{enumerate}
\begin{equation}\label{equation}
    \int_D u_t v \, dx \, dy + \int_D (\nabla u \nabla v +  u_y  v_y) \, dx \, dy + \int_D a u  v \, dx \, dy  =  \int_D f v \, dx \, dy.
\end{equation}
\end{опр}

Before formulating the main results of this work let us introduce some notations (see Remark \ref{norms}):
\[
\tau_1=\frac{1+\varepsilon}{4},\,\, \tau_2=\frac{3+\varepsilon}{4},
\]
\[
A_\varepsilon = \sqrt{\pi}\left(1+\frac{1}{2\varepsilon}\right)^{\frac{1}{2}} ||\omega''|| \max_{t, x} \frac{1}{|\psi(t, x)|}, \,\, B= 2 A_\varepsilon^2 C^2_S,
\]
\[
\Psi (t, x) = \frac{-\psi_t(t, x) +\Delta_x \psi(t, x)+ (f(t, x, \cdot), \omega)}{\psi (t,x)} , \,\, \Psi_M = \max_{t, x} |\Psi(t, x)|,
\]
\[
R = ||\varphi||_{1, \tau_1} +  8 \Psi^2_M T ||\varphi||_{0, \tau_1} + ||\varphi||_{0, \tau_2} + 4 ||f||_{G_T, \tau_1},
 \]
 \[
R_1 = ||\varphi||_{1, \tau_1} +   ||\varphi||_{0, \tau_2} + 4 ||f||_{G_T, \tau_1},
\]
assuming that all given quantities exist. Here $\varepsilon$ is an arbitrary positive constant and $C_S$ is the constant from (\ref{Sobolev}).

The following theorem defines the conditions for local solvability of the problem posed.

\begin{thm}\label{theorem local}

Let $\varepsilon$ be any positive number and suppose the following conditions hold:
\begin{enumerate}
    \item \( f(t, x, y) \in  C (\overline{G_T}; \dot{W}_2^{\tau_1} (0, \pi))\);
    \item \( D_t \psi, \,\Delta \psi \in C( \overline{G_T}) \),\,\,\( \psi(t, x) \neq 0 \) for all \( (t, x) \in [0, T] \times G \);
    \item \( \varphi \in W_2^1(G; \dot{W}_2^{\tau_1}(0, \pi)) \) and \( \varphi \in L_2(G; \dot{W}_2^{\tau_2}(0, \pi)) \)
    \item   \( 2 \Psi_M T \leq A_\varepsilon C_S\);
    \item \(4 R B < 1 \).
      \end{enumerate}
Then, the inverse problem has a unique weak solution. Moreover, the following estimates hold:
\begin{equation}\label{uestimate1}
||u||^2_{L_2(Q)}  \leq K_1,
\end{equation}
 \begin{equation}\label{u_xu_y}
||\nabla u||^2_{G_T, \tau_1} +  ||u_t||^2_{G_T, \tau_1} + ||u||^2_{G_T, \tau_2} \leq K_2,
\end{equation}
\begin{equation}\label{estimate_a} 
||a||^2_{L_2(G_T)} \leq K_3,
\end{equation}
where $K_j,\, j=1, 2, 3,$ are positive constants depending on the data of the problem.

\end{thm}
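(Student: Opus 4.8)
The plan is to collapse the inverse problem into a single nonlinear problem for $u$ alone and then solve it by a fixed-point argument driven by weighted energy estimates. Substituting the representation \eqref{7} into the weak identity \eqref{equation} eliminates $a$ and, writing $a=\Psi+\tfrac{P}{\psi}$ with $P(t,x)=(u(t,x,\cdot),\omega'')$, exposes the quadratic nonlinearity $\tfrac{P}{\psi}\,u$. Expanding $u$ in the eigenbasis $\{\sin ky\}$ of $L$, i.e.\ $u(t,x,y)=\sum_k u_k(t,x)\sin ky$, separates the $y$-variable and yields the infinite system
\[
\partial_t u_k-\Delta u_k+k^2 u_k+\Big(\Psi+\tfrac{P}{\psi}\Big)u_k=f_k,\qquad u_k(0,\cdot)=\varphi_k,\quad u_k|_{\partial G}=0,
\]
where $P$ is the nonlocal coupling. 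This is precisely the system derived in Section 4, and it is the object on which the a priori estimates of that section operate.

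Next I would set up the iteration on a ball of the energy space: given $\tilde u$, freeze the coupling $\tilde P=(\tilde u,\omega'')$, solve the resulting \emph{linear} parabolic system for the next iterate, and prove the solution map is a well-defined contraction. The estimates come from testing the $k$-th equation against $\lambda_k^{\tau_1}u_k$ and against $\lambda_k^{\tau_1}\partial_t u_k$, integrating over $G$, and summing in $k$: the elliptic term produces $\|\nabla u\|_{G_T,\tau_1}^2$, the time-derivative test produces $\|u_t\|_{G_T,\tau_1}^2$, and the operator $L$ (the $k^2u_k$ terms), through the weight $\lambda_k^{\tau_1}$, controls the stronger norm $\|u\|_{G_T,\tau_2}^2$ in \eqref{u_xu_y} (note $\tau_1+1\ge\tau_2$), while the Dirichlet Poincaré inequality \eqref{DirixlePoincaré} absorbs lower-order growth. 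The zeroth-order term $\Psi u$ is handled by the time-Poincaré inequality of Lemma \ref{Poincaré}, which transfers it onto the initial data; this is the origin of the term $8\Psi_M^2 T\|\varphi\|_{0,\tau_1}$ in $R$ and of the smallness hypothesis (4).

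The decisive point is the nonlocal cubic term. By Cauchy--Schwarz in $y$ one has $|P(t,x)|\le\|\omega''\|\,\|u(t,x,\cdot)\|$, so the constant $A_\varepsilon$ measures exactly $\|\omega''\|\max_{t,x}|\psi(t,x)|^{-1}$ together with the $\varepsilon$-factor from \eqref{Cauchy inequality}, while the Sobolev inequality \eqref{Sobolev} (available because $m\le 3$, cf.\ Remark \ref{m}) converts the product of three $u$-factors into $W_2^1$-norms; this is why the closure constant takes the form $B=2A_\varepsilon^2C_S^2$. Collecting all bounds yields a quadratic inequality of the schematic shape $N\le R+B\,N^2$ for the energy quantity $N$ collecting the left sides of \eqref{uestimate1}--\eqref{u_xu_y}, and condition (5), $4RB<1$, is exactly the discriminant condition that pins $N$ to the smaller root and hence to a quantity of order $R$, giving the explicit constants $K_1,K_2$. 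The bound \eqref{estimate_a} then follows by inserting these estimates into \eqref{7} and using the continuity of $\psi,f,\Delta\psi,\psi_t$; uniqueness is obtained by running the same weighted energy argument on the difference of two solutions and invoking $4RB<1$ once more to force the difference to vanish.

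The step I expect to be the main obstacle is the sharp estimate of $\int_{G}\tfrac{P}{\psi}\sum_k\lambda_k^{\tau_1}u_k^2$: one must distribute the three $u$-factors so that two are spent on the Sobolev $L_4(G_T)$ embedding and the third on the weighted energy, and then apply \eqref{Cauchy inequality} with the free parameter $\varepsilon$ so that the resulting constant is \emph{precisely} $B$ rather than merely some admissible constant. Making this bookkeeping reproduce the stated $A_\varepsilon$, $B$, $R$ is the delicate part, and it is what forces the particular fractional weights $\tau_1=\frac{1+\varepsilon}{4}$, $\tau_2=\frac{3+\varepsilon}{4}$ to appear, since these are the exponents for which $D(L^{\tau_i})=\dot W_2^{2\tau_i}(0,\pi)$ by Fujiwara's result and for which the Sobolev balancing stays inside the admissible range.
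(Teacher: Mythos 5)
Your proposal follows essentially the same route as the paper: Fourier expansion in $y$ reducing to the coupled system for $u_k$, successive approximations with the nonlocal coefficient frozen at the previous iterate, energy estimates obtained by testing with $D_t u_k$, the time-Poincar\'e lemma to convert the $\Psi$-term into the $8\Psi_M^2 T\|\varphi\|_{0,\tau_1}$ contribution under condition (4), Cauchy--Schwarz plus the Sobolev $L_4$ embedding to produce $A_\varepsilon$ and $B=2A_\varepsilon^2C_S^2$ for the cubic nonlocal term, and the smallness condition $4RB<1$ to get the uniform bound (the paper phrases your quadratic-inequality argument as the induction $F(u^i)\le 2R$) and the geometric contraction that yields convergence, the estimates \eqref{uestimate1}--\eqref{estimate_a}, and uniqueness on differences of solutions. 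The approach and all key mechanisms match; only minor bookkeeping (the paper uses the single test function $D_t u_k^i$ with weight $\sqrt{\lambda_k}^{1+\varepsilon}$ applied at the summation stage) differs.
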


The fourth condition of the theorem states the existence of a solution only for sufficiently small $T$.

Now we present a theorem on the global solvability of the problem under consideration.
\begin{thm}\label{theorem global}

Let all the conditions of Theorem \ref{theorem local} be satisfied except (4) and (5). Let the following two conditions be satisfied instead of these conditions: \begin{enumerate}
    \item \( 2 \Psi_M^2 C_P \leq A_\varepsilon^2 C^2_S\);
    \item \(4 R_1 B <1\).
      \end{enumerate}
Then all the statements of Theorem \ref{theorem local} remain valid.
\end{thm}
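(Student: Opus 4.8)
The plan is to run the same Fourier-plus-energy argument that establishes Theorem~\ref{theorem local}, changing only the single step where the explicit dependence on $T$ was produced. First I would expand the unknown in the basis $\{\sin ky\}$, writing $u(t,x,y)=\sum_{k\ge 1}u_k(t,x)\sin ky$; since $a=a(t,x)$ does not depend on $y$, problem \eqref{1}--\eqref{overdetermination} reduces to the infinite system
\[
\partial_t u_k-\Delta u_k+\lambda_k u_k+a(t,x,u)\,u_k=f_k(t,x),\qquad u_k|_{\partial G}=0,\quad u_k(0,\cdot)=\varphi_k,
\]
with $a(t,x,u)=\Psi(t,x)+\psi^{-1}(t,x)\,(u,\omega'')$ being the split of \eqref{7} into its known part $\Psi$ and the part that is quadratic in $u$.

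Then I would derive the a priori estimates \eqref{uestimate1}--\eqref{estimate_a} by multiplying the $k$-th equation by $\lambda_k^{2\tau_1}u_k$ (and, for the $u_t$ and $\dot{W}_2^{\tau_2}$ parts, by the corresponding weighted test functions), integrating over $G$ and summing in $k$. The cubic contribution coming from $\psi^{-1}(u,\omega'')\,u_k$ is controlled exactly as before: bound $(u,\omega'')$ by $\|\omega''\|$ times $\|u\|$, invoke the Sobolev embedding \eqref{Sobolev} (this is where $m\le 3$ is used, Remark~\ref{m}) to pass to $L_4(G_T)$, and apply the Cauchy inequality \eqref{Cauchy inequality}; this produces the constants $A_\varepsilon$ and $B=2A_\varepsilon^2C_S^2$. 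The essential difference lies in the linear lower-order term $\int_G\Psi\,u_k^2\,dx$. In the local proof this term is handled by the time Poincaré inequality of Lemma~\ref{Poincaré}, which is precisely what injects the factor $T$ and the boundary-in-time contribution $8\Psi_M^2T\|\varphi\|_{0,\tau_1}$ into $R$. Here I would instead apply the spatial Dirichlet Poincaré inequality \eqref{DirixlePoincaré} pointwise in $t$, $\|u_k(t,\cdot)\|_{L_2(G)}^2\le C_P\|\nabla u_k(t,\cdot)\|_{L_2(G)}^2$, and absorb the resulting term into the gradient dissipation $\|\nabla u\|_{G_T,\tau_1}^2$. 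Global hypothesis~(1), $2\Psi_M^2C_P\le A_\varepsilon^2C_S^2$, is exactly the inequality guaranteeing that the coefficient produced by this step fits inside the share of the dissipation budget not already consumed by the cubic term. Because no boundary-in-time term appears, $R$ is replaced throughout by $R_1$.

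The remainder of the argument is then identical to the local case. The weighted energy estimate collapses to a single scalar quadratic inequality $y\le R_1+By^2$ for the combined norm $y=\|\nabla u\|_{G_T,\tau_1}^2+\|u_t\|_{G_T,\tau_1}^2+\|u\|_{G_T,\tau_2}^2$; global hypothesis~(2), $4R_1B<1$, makes its discriminant positive, so $y$ is bounded by the smaller root $\frac{1-\sqrt{1-4R_1B}}{2B}$, which yields \eqref{uestimate1}--\eqref{estimate_a} with constants $K_j$ that no longer depend on $T$. Existence via the same successive-approximation (or Galerkin) scheme, and uniqueness via the energy estimate for the difference of two solutions, carry over verbatim, since the only place $T$ entered the closing bounds was the now-replaced application of Lemma~\ref{Poincaré}.

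I expect the main obstacle to be the simultaneous bookkeeping of the dissipation terms: both the absorption of the $\Psi$-term (via $C_P$) and the control of the cubic nonlinearity (via $C_S$ and the Cauchy parameter $\delta$) draw on the same quantities $\|\nabla u\|_{G_T,\tau_1}^2$ and $\|u\|_{G_T,\tau_2}^2$ on the left-hand side, so I must allocate explicit fractions to each and verify that hypothesis~(1) leaves enough room while still reproducing $B$ with the same constant, so that the closing condition is exactly $4R_1B<1$. Checking that this split is consistent across all three weighted estimates (for $u$, for $u_t$, and for the $\dot{W}_2^{\tau_2}$ norm) simultaneously is the delicate point.
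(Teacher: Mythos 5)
Your proposal follows essentially the same route as the paper: the paper proves Theorem \ref{theorem global} by rerunning the a priori estimate of Lemma \ref{main} with the temporal Poincar\'e inequality of Lemma \ref{Poincaré} replaced by the spatial Dirichlet Poincar\'e inequality (\ref{DirixlePoincaré}), which eliminates the $T$-dependent and initial-data contributions (so $R$ becomes $R_1$), and the two new hypotheses then give Lemma \ref{Final2} in place of Lemma \ref{Final1}, after which the convergence, existence and uniqueness arguments of Sections 5--6 are repeated unchanged. Your key substitution of the spatial for the temporal Poincar\'e estimate, the replacement of $R$ by $R_1$, the role you assign to condition (1) in controlling the $\Psi$-term and to condition (2) in closing the quadratic iteration, and the verbatim reuse of the successive-approximation and uniqueness steps reproduce exactly this argument.
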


As can be seen from the conditions of this theorem, there are no restrictions on the finite time $T$. But the first condition of the theorem requires the smallness of the value $C_P$. As follows from Remark \ref{optimalC}, this condition means the smallness of the measure $G$.

\begin{зам}\label{zam.1} By virtue of S. M. Bernstein's theorem on the uniform convergence of trigonometric series (see \cite{Zyg}, p. 384), the conditions of the theorem on the functions $f$ and  $\varphi$ guarantee absolute and uniform convergence on \( [0, \pi] \) of the corresponding Fourier series for almost all $x\in G$ and $t\in (0, T)$.
\end{зам}

\section{A priori estimates}

Let us write the function $a$ in a form convenient for us.  To do this, we represent the solution to the forward problem (\ref{1}) as a formal Fourier series:
\begin{equation}\label{6}
    u(t, x, y) = \sum_{k=1}^\infty u_k(t, x) \sin k y,
\end{equation}
where \( u_k(t, x) \) are the unknown functions and \( \sin k y \) are the eigenfunctions of operator $L_0$.
Then formula \eqref{7} implies
\begin{equation}\label{h_final}
    a(t, x) = \frac{-\psi_t (t, x) +\Delta_x \psi(t, x)+ (f(t, x, \cdot), \omega) + \sum_{j=1}^\infty (\sin \sqrt{\lambda_j} y , \omega'') u_j(t,x) }{\psi (t,x)}.
\end{equation}

Now we decompose the functions \( f(t, x, y) \),  and \( \varphi(x, y) \) into Fourier series (see Remark \ref{zam.1}) and denote their corresponding Fourier coefficients by \( f_k(t, x) \),  and \( \varphi_k(x) \). Substitute these series and the representation \eqref{6} for \( u(t, x, y) \) into equation \eqref{equation}, with the test function \( v(x, y) \) replaced by \( w(x) \sin k y \), where \( w \in \dot{W}_2^1(G) \). 
Thus we obtain the following equation to determine the unknown coefficients \( u_k(t, x) \) (see \eqref{6}):
\begin{equation}\label{81}
\int_G D_t u_k w \, dx + \int_G \nabla u_k \nabla w \, dx + \lambda_k \int_G u_k w \, dx = \int_G f_k w \, dx - \int_G a u_k w \, dx.
\end{equation}
Note that here, when calculating the third term on the left-hand side, we used the equalities $v_y= \sqrt{\lambda_k} w(x) \cos ky$ and $u_y=\sum_{j=1}^\infty \sqrt{\lambda_j} \, u_j(t,x) \cos \sqrt{\lambda_j} y$ and then used the orthogonality  of the system $\{\cos \sqrt{\lambda_j} y\}$ in $L_2(0, \pi)$.

Our next task is to find the unknown coefficients $u_k$ from the equations (\ref{81}). But here \( a \) is defined using all \( u_j(t, x) \), \( j = 1, 2, \dots \) (see \eqref{h_final}). This fact creates a certain problem and in order to solve it we use the method of successive approximations, i.e. using recurrence relations we construct a sequence \( \{ u_k^i \} \), \( i = 1, 2, \dots \), and then we prove that \( \{ u_k^i \} \to \{ u_k \} \) as \( i \to \infty \) in the appropriate norm.

The corresponding recurrence relations for all \( w \in \dot{W}_2^1(G) \), \( k \geq 1 \) and \( i \geq 1 \) have the form:
\begin{equation}\label{9}
\int_G D_t u_k^i w \, dx + \int_G \nabla u_k^i \nabla w \, dx + \lambda_k \int_G u_k^i w \, dx 
\end{equation}
\[
= \int_G f_k w \, dx - \int_G \Psi(t, x) u_k^{i-1}  w \, dx - \int_G \frac{ \sum_{j=1}^\infty (\sin \sqrt{\lambda_j} y , \omega'') u^{i-1}_j(t,x) }{\psi (t,x)}  u_k^{i-1} w \, dx,
\]
with the initial conditions:
\begin{equation}\label{10}
u_k^i(0, x) = \varphi_k(x),
\end{equation}
where $u^0_k\equiv 0$ for all $k$ and
\[
\Psi (t, x) = \frac{-D_t \psi(t, x) +\Delta_x \psi(t, x)+ (f(t, x, \cdot), \omega)}{\psi (t,x)},
\]

If all $u^{i-1}_j$, $j= 1, 2,  \cdots, $  are known, then the problem \eqref{9}–\eqref{10} is well studied (see, e.g., \cite{Ladijenskaya}, Chapter 3,  \cite{AMux2}, \cite{AMux}).
.  It is not hard to show (see, e.g., \cite{Ladijenskaya}, Chapter 3) that if \(f_k\in L_2(G_T)\) and the initial function \(\varphi_k(x) \in L_2(G)\) (which holds in our case), then there exists a unique strong solution of the problem satisfying \(u_k^i(t, x) \in L_2(G_T) \cap L_2(0, T; W_2^2(G) \cap \dot{W}_2^1(G))\) and \(D_t u_k^i(t, x) \in  L_2(G_T)\). Evidently, such a strong solution is also a weak solution.

\begin{лем}\label{main} Let $2 \Psi_M T \leq A_\varepsilon C_S$. Then we have
\[
\sum_{k=1}^\infty \sqrt{\lambda_k}^{1+ \varepsilon} \left [\| D_t u_k^i \|_{L_2(G_T)}^2   + \sup_t||\nabla u_k^i||_{L_2(G)}^2 + \lambda_k \sup_t||u_k^i||_{L_2(G)}^2\right]
\]
\[
\leq  \sum_{k=1}^\infty \sqrt{\lambda_k}^{1+ \varepsilon}\left[||\nabla \varphi_k||_{L_2(G)}^2 + 8 \Psi^2_M T \| \varphi_k \|_{L_2(G)}^2\right] + \sum_{k=1}^\infty \sqrt{\lambda_k}^{2+ \varepsilon} ||\varphi_k||_{L_2(G)}^2
\]
\[
 + 4 \sum_{k=1}^\infty \sqrt{\lambda_k}^{1+ \varepsilon} ||f_k||^2_{L_2(Q_t)} +2 A_\varepsilon^2 C^2_S  \left( \sum_{k=1}^\infty \sqrt{\lambda_k}^{1+ \varepsilon} ||u_k^{i-1}||_{W^1_2(G_T)}^2\right)^2.
\]

\end{лем}
\begin{proof}
Substitute \( w = D_t u_k^i \) into equation \eqref{9} and integrate it, to get
\begin{equation}\label{D+nabla}
\int_0^t\left[\| D_t u_k^i \|_{L_2(G)}^2 + \int_G \nabla u_k^i\, D_t\, \nabla u_k^i \, dx + \lambda_k \int_G u_k^i\, D_t\,u_k^i \, dx \right] ds
\end{equation}
$$
=\int_0^t\left[ \int_G f_k  D_t u_k^i \, dx - \int_G \Psi(t, x) u_k^{i-1} D_t u_k^i \, dx\right] ds -$$ 
$$-\int_0^t\left[\int_G \frac{ \sum_{j=1}^\infty  (\sin \sqrt{\lambda_j} y , \omega'') u^{i-1}_j(t,x) }{\psi (t,x)}  u_k^{i-1} D_t u_k^i \, dx\right] ds.
$$
Let us denote the three integrals on the right-hand side by $J^j_k$, $j=1, 2, 3$, respectively. First note that
\[
2 \int_0^t D_s u_k^i  u_k^i \,   ds = |u_k^i(t, x, y)|^2 - |u_k^i(0, x, y)|^2.
\]
A similar equality holds for the gradient of the function $u_k^i$. Then equality (\ref{D+nabla}) can be rewritten as the inequality
\begin{equation}\label{Dt}
\| D_t u_k^i \|_{L_2(G_t)}^2 + \frac{1}{2}||\nabla u_k^i||_{L_2(G)}^2 + \frac{\lambda_k}{2} ||u_k^i||_{L_2(G)}^2 \leq 
\end{equation}
$$
\leq \frac{1}{2}||\nabla \varphi_k||_{L_2(G)}^2 + \frac{\lambda_k}{2} ||\varphi_k||_{L_2(G)}^2 +\int_0^t \sum_{j=1}^3 |J^j_k|ds.
$$
Apply the Cauchy inequality (\ref{Cauchy inequality}) with $\delta =4$ to get 
\[
|J_k^1| \leq 2||f_k||^2_{L_2(G_t)}  + \frac{1}{8} ||D_tu_k^i||^2_{L_2(G_t)},
\]
\[
|J_k^2| \leq 2 \Psi^2_M ||u^{i-1}_k||^2_{L_2(G_t)} + \frac{1}{8}   \, ||D_tu_k^i||^2_{L_2(G_t)}.
\]
Let us estimate the integral $J^3_k$. The Cauchy-Schwarz inequality implies
\[
|\sum_{j=1}^\infty (\sin \sqrt{\lambda_j} y , \omega'') u_j^{i-1} (t, x)| \leq \sqrt{\pi}||\omega''|| \left (\sum_{j=1}^\infty \sqrt{\lambda_j}^{-(1+ \varepsilon)}\right)^{\frac{1}{2}} \left (\sum_{j=1}^\infty \sqrt{\lambda_j}^{1+ \varepsilon} |u_j^{i-1}|^2\right)^{\frac{1}{2}},
\]
we have
\[
J^3_k \leq  A_\varepsilon \int_{G_t} \left (\sum_{j=1}^\infty \sqrt{\lambda_j}^{1+ \varepsilon} |u_j^{i-1}|^2\right)^{\frac{1}{2}}  |u_k^{i-1}| |D_t u_k^i| \, dx ds.
\]
We apply the Cauchy inequality (\ref{Cauchy inequality}) with the constant $\delta =2$
to the last integral:
\[
J^3_k  \leq A_\varepsilon^2\int_{G_t}  \sum_{j=1}^\infty \sqrt{\lambda_j}^{1+ \varepsilon} |u_j^{i-1}|^2 |u_k^{i-1}|^2 \, dx ds  +\frac{1}{4} ||D_t u_k^i||_{L_2(G_t)}^2.
\]
By virtue of the Cauchy-Schwarz and Sobolev (see (\ref{Sobolev}), and Remark \ref{m})  inequalities, we have 
\[
\int_0^t\int_G |u_j^{i-1}|^2 |u_k^{i-1}|^2 \, dx ds \leq ||u_j^{i-1}||_{L_4(G_t)}^2 ||u_j^{i-1}||_{L_4(G_t)}^2 \leq C^2_S || u_j^{i-1}||_{W^1_2(G_t)}^2 ||u_k^{i-1}||_{W^1_2(G_t)}^2.
\]
Thus
\[
\sum_{k=1}^\infty \sqrt{\lambda_k}^{1+ \varepsilon} |J_k^3| \leq 
A_\varepsilon^2 C^2_S  \left( \sum_{k=1}^\infty \sqrt{\lambda_k}^{1+ \varepsilon} ||u_k^{i-1}||_{W^1_2(G_t)}^2\right)^2 + \frac{1}{4} \sum_{k=1}^\infty \sqrt{\lambda_k}^{1+ \varepsilon}||D_tu_k^i||_{L_2(G_t)}^2.
\]
Substituting the estimates of $J_k^j$ into (\ref{Dt}) we get
\begin{equation}\label{main part}
\frac{1}{2}\sum_{k=1}^\infty \sqrt{\lambda_k}^{1+ \varepsilon} \left [\| D_t u_k^i \|_{L_2(G_t)}^2 ds  + ||\nabla u_k^i||_{L_2(G)}^2 + \lambda_k ||u_k^i||_{L_2(G)}^2\right]
\end{equation}
\[
\leq  \frac{1}{2}\sum_{k=1}^\infty \sqrt{\lambda_k}^{1+ \varepsilon}||\nabla \varphi_k||_{L_2(G)}^2 + \frac{1}{2}\sum_{k=1}^\infty \sqrt{\lambda_k}^{2+ \varepsilon} ||\varphi_k||_{L_2(G)}^2 + 2 \sum_{k=1}^\infty \sqrt{\lambda_k}^{1+ \varepsilon} ||f_k||^2_{L_2(Q_t)}
\]
\[
+2 \Psi^2_M \sum_1^\infty \sqrt{\lambda_k}^{1+\varepsilon}   ||u_k^{i-1}||_{L_2(G_t)}^2  +  A_\varepsilon^2 C^2_S  \left( \sum_{k=1}^\infty \sqrt{\lambda_k}^{1+ \varepsilon} ||u_k^{i-1}||_{W^1_2(G_t)}^2\right)^2.
\]
Applying Lemma \ref{Poincaré} to function $g(t) = ||u_k^{i-1} ||_{L_2(G)}$, we get
\begin{equation}\label{Lemma 1}
||u_k^{i-1}||_{L_2(G_T)}^2 \leq T^2 ||\frac{\partial}{\partial t} u_k^{i-1}||^2_{L_2(G_T)} + 2 T \| \varphi_k \|_{L_2(G)}^2.
\end{equation}
Therefore, taking the maximum over $t\in [0, T]$ in the previous inequality and then using the condition of the lemma $2 \Psi_M T \leq A_\varepsilon C_S$, we will have the statement of the lemma.
\end{proof}

\begin{зам}The meaning of using the estimate (\ref{Lemma 1}) is as follows: we need the coefficients before $u_k^{i-1}$ to be small. However, the number $\Psi_M$ cannot be made small by selecting $\psi$ and $f$. After using the estimate (\ref{Lemma 1}), the condition $2 \Psi_M T \leq A_\varepsilon C_S$ can be achieved by taking $T$ small enough.
\end{зам}

\begin{лем}\label{Final1} Let $R$ and $B$ be the quantities defined above. Then if $T \leq 1$ and
$$
2 \Psi_M T \leq A_\varepsilon C_S, \,\, q:=4 R B < 1,
   $$
then the following estimate is valid for all $i$
\begin{equation}\label{final}
F( u^i):=\sum_{k=1}^\infty \sqrt{\lambda_k}^{1+ \varepsilon} \left [\| D_t u_k^i \|_{L_2(G_T)}^2   + \sup_t||\nabla u_k^i||_{L_2(G)}^2 + \lambda_k \sup_t||u_k^i||_{L_2(G)}^2\right] \leq 2 R.
\end{equation}

\end{лем}
\begin{proof} Lemma \ref{main} implies
\[
F( u^i) \leq R + B \left( \sum_{k=1}^\infty \sqrt{\lambda_k}^{1+ \varepsilon} ||u_k^{i-1}||_{W^1_2(G_T)}^2\right)^2.
\]
    Let $u_k^{0} =0$. Then
    $F( u^1) \leq R < 2 R.$ Therefore, by the condition of the theorem and taking into account the condition $\lambda_k \geq 1$,
    $$
    F( u^2)  \leq 2 R +B \left( \sum_{k=1}^\infty \sqrt{\lambda_k}^{1+ \varepsilon} \left[\| D_t u_k^1 \|_{L_2(G_T)}^2+ T \sup_t||\nabla u_k^1||_{L_2(G)}^2 + T\sup_t|| u_k^1||_{L_2(G)}^2 \right]\right)^2
    $$ 
    \[
    \leq R + 4 R^2 B\leq 2 R.
    \]
    Continuing this process, we obtain the statement of the theorem.
\end{proof}

Now in the relation (\ref{main part}) instead of the estimate (\ref{Lemma 1}) we will use the estimate (see (\ref{DirixlePoincaré}))
\begin{equation}\label{estimate and remark}
  ||u_k^{i-1}||_{L_2(G_T)}^2 \leq C_P ||\nabla u_k^{i-1}||_{L_2(G_T)}^2.
\end{equation}
Using this estimate, the smallness of the coefficient in front of $u_k^{i-1}$ can be achieved due to the smallness of the measure of the region $G$ (see Remark \ref{optimalC}).

If $R_1$ is the quantity defined above then instead of Lemma \ref{Final1} we obtain the following statement
\begin{лем}\label{Final2} Let $2 \Psi_M^2 C_P \leq A_\varepsilon^2 C^2_S$ and $4 R_1 B <1$. Then $F(u^i) \leq 2 R_1$ for all $i$.
\end{лем}
\section{Convergence}
We aim to demonstrate that the sequences \( u_k^i \), \( \nabla u_k^i \), and \( D_t u_k^i \), for \( i=1,2,\dots \), are fundamental in $L_2(G_T)$ norm for all \( k \geq 1 \).

Further, we will assume that the conditions of Lemma \ref{Final1} are fulfilled.

By reiterating the previous arguments, we can establish estimates \eqref{final} for \( u_k^{i+p} - u_k^i \), where \( p = 1,2,3,\dots \). Indeed, consider writing equality (\ref{9}) for \( u_k^{i+1} \) and subtracting (\ref{9}) from it. This  leads  in (noting that here \( f_k(t, x) \equiv 0 \) and $\Psi (t, x) \equiv 0$):
$$
\int_G D_t (u_k^{i+1}-u_k^i)\, w \, dx + \int_G \nabla(u_k^{i+1}-u_k^i) \,\nabla w \, dx + \lambda_k \int_G (u_k^{i+1}-u_k^i) \,w \, dx 
$$
\[
=  - \int_G \frac{ \sum_{j=1}^\infty (\sin \sqrt{\lambda_j} y , \omega'') ( u^i_j(t,x)- u^{i-1}_j(t,x)) }{\psi (t,x)}   u_k^{i-1} w \, dx
\]
\[
- \int_G \frac{ \sum_{j=1}^\infty (\sin \sqrt{\lambda_j} y , \omega'')  u^i_j(t,x) }{\psi (t,x)}  (u_k^{i} - u_k^{i-1}) w \, dx,
\]
for all \( w \in \dot{W}_2^1(0, 1) \), with the initial condition:
\begin{equation}\label{101}
(u_k^{i+1}-u_k^i)(0, x) = 0.
\end{equation}
By applying the same  reasoning as in the proof of Lemma \ref{Final1}, but substituting \eqref{101} for the initial condition \eqref{10}, we obtain the inequality:
\[
F(u^{i+1} - u^{i})  \leq B \sum_{k=1}^\infty \sqrt{\lambda_k}^{1+ \varepsilon} ||u_k^{i}-u_k^{i-1}||_{W^1_2(G_T)}^2 \sum_{k=1}^\infty \sqrt{\lambda_k}^{1+ \varepsilon}(||u_k^{i}||_{W^1_2(G_T)}^2 + ||u_k^{i-1}||_{W^1_2(G_T)}^2).
\]
Since \( u_k^0 = 0 \) for all \( k \geq 1 \), then from Lemma \ref{Final1}, we get
\[
F(u^{2} - u^{1}) \leq B (2 R)^2 < R q. 
\]
Likewise
\[
F(u^{3} - u^{2}) \leq  B\,R \,q\, 4R \leq R q^2.
\]
For any \( i \geq 2 \) the last two estimates imply:
\[
F(u^{i+1} - u^{i})  \leq  R q^i.
\]
Hence, if we write $u_k^{i+p} - u_k^i=u_k^{i+p} - u_k^{i+p-1}+ u_k^{i+p-1}- u_k^{i+p-2}+\cdots$, then
\begin{equation}\label{fundamental}
F(u^{i+p} - u^{i}) \leq R  q^{i-1} ( 1 + q +q^2 + \cdots q^p)= R  q^{i-1} \frac{1-q^{p-1}}{1-q}.
\end{equation}
This indicates that the sequence \(\{F(u^{i})\}\) is fundamental in \(L_2(G)\). Therefore, for each \(k \geq 1\), there exist functions \(u_k(t,x) \in L_2(G_T)\) such that, as \(i \to \infty\), we have:
\begin{equation}\label{converU}
\begin{cases}
u_k^i(t, x) \to u_k (t, x)\quad \text{in } L_2(G_T), \quad k=1,2, 
\cdots,\\
\sum_{k=1}^\infty \sqrt{\lambda_k}^{1+ \varepsilon}\left(||\nabla (u_k^i - u_k (t, x)) ||_{L_2(G_T)}^2 + ||D_t (u_k^i - u_k (t, x)) \|_{L_2(G_T)}^2 \right) \to 0, \\
\sum_{k=1}^\infty \sqrt{\lambda_k}^{3+ \varepsilon} ||u_k^i -u_k (t, x)||_{L_2(G_T)}^2  \to 0,
\end{cases}
\end{equation}
From the reasoning given in the proof of (\ref{fundamental}), it also follows (see definition of $a(t,x, u)$ in (\ref{7}))
\begin{equation}\label{conver_au}
||a(t, x, \sum_j u_j^{i+p}) u_k^{i+p} - a(t, x, \sum_j u_j^{i}) u_k^{i}||_{L_2(D)}^2\to 0,\,\, \text{as} \,\, i\to \infty.
\end{equation}
If we take into account the convergence (\ref{converU}) and Parseval's equality, we come to the conclusion that the function
\[
u (t, x, y) = \sum\limits_{k=1}^{\infty} u_k (t, x) \sin k y,
\]
is well-defined element of $L_2(Q)$ with properties
\begin{equation}\label{Du_estimate}
||\nabla u||^2_{G_T, \tau_1} +  ||u_t||^2_{G_T, \tau_1} + ||u||^2_{G_T, \tau_2} \leq K_2,
\end{equation}
From here and from the Poincaré inequality, we obtain
\begin{equation}\label{uestimate}
||u||^2_{L_2(Q)}  \leq K_1.
\end{equation}
Again from (\ref{converU})  and Parseval's equality, it follows that the function $a(t,x)$ defined by the formula (\ref{h_final}) is a well-defined element of the space $L_2(G_T)$:
\begin{equation}\label{a}
||a||^2_{L_2(G_T)}  \leq K_3.
\end{equation}

Obviously, from the estimates (\ref{Du_estimate}) - (\ref{a}) the estimates (\ref{uestimate1}) - (\ref{estimate_a}) follow.

\begin{зам}All the above results of this section are also valid under the condition of Lemma \ref{Final2}.
\end{зам}

\section{Proof of Theorems}

To the proof of Theorem \ref{theorem local}, we first show the existence of a weak solution of the inverse problem. We integrate \eqref{9} over $t\in [0,T]$, and then pass to the limit $i \rightarrow \infty$ and, taking into account \eqref{converU} and (\ref{conver_au}), we obtain the following equalities, valid for an arbitrary \( w \in \dot{W}_2^1(G) \) and for all $k$: \begin{equation}\label{22}
\int_0^t\left[( D_t^{\alpha} u_k, w ) + (\nabla u_k, \nabla w ) + \lambda_k ( u_k, w )+ (a u_k, w)\right] d\tau =\int_0^t ( f_k, w ) d\tau.  
\end{equation}
Next, we take the function $w=v(x,y) \sin k y$, $v\in \dot{W}_2^1(\Omega)$, and integrate equality (\ref{22}) over $y\in [0, \pi]$. Then, summing over $k$ from 1 to $\infty$, we obtain for all $t\in [0, T]$ 
 \begin{equation}\label{solution_t}
\int_0^t \int_D \left[D_t^\alpha u v + \nabla u \nabla v + u_{y} v_{y} +au v \right]\, dx \, dy d\tau = \int_0^t\int_D f v\, dx \, dy d\tau.
\end{equation}

Note that if $s(t)$ is integrable in any subset $(0,t)$ of the interval $[0,T]$ and $\int_0^t s(\tau) d\tau =0$, then obviously $s(t)=0$ almost everywhere on $[0,T]$. Therefore, the equality (\ref{solution_t}) coincides with (\ref{equation}). Consequently, the function $u(t,x,y)$ defined by formula (\ref{6}), together with $a$, defined by formula (\ref{h_final}), is a weak solution of the inverse problem, i.e., it satisfies all the conditions of Definition \ref{opr1}. As for the estimates \eqref{uestimate1} - \eqref{estimate_a}, they are proved above (see the estimates \eqref{Du_estimate} - (\ref{a})).  

Let us prove the uniqueness of the weak solution of the inverse problem.

Suppose the opposite, i.e. there are two solutions to the inverse problem: \( (u^1, a(t, x, u^1)) \) and \( (u^2, a(t, x, u^2)) \), where $a(t, x, u^k)$ is defined by (\ref{7}) (note, that  \( f(t,x,y) \equiv 0 \) and $\varphi(x, y) \equiv 0$). Let us denote $u= u^1- u^2$ and
\[
u_j(t, x) = \int\limits_\Omega u(t, x, y) \sin \sqrt{\lambda_j} y \, dy, \,\, u^k_j(t, x) = \int\limits_\Omega u^k(t, x, y) \sin \sqrt{\lambda_j} y \, dy.
\]
Consider the series defining the function $a(t, x, u^k)$ as given in  (\ref{h_final}):
\[\sum_{j=1}^\infty (\sin \sqrt{\lambda_j} y , \omega'') u^k_j (t, x), \, \, k=1, \, 2.
\]
Estimate (\ref{a}) means that this function
exists almost everywhere in $G_T$. Therefore, the function
\[
 a (t, x, u^k) = \frac{-\psi_t (t, x) +\Delta_x \psi(t, x) + \sum_{j=1}^\infty (\sin \sqrt{\lambda_j} y , \omega'') u^k_j(t,x) }{\psi (t,x)}, \, \, k=1, \, 2.
\]
(see (\ref{h_final})) is correctly defined. Then for functions $u^k$ one has (see the estimate of $F(u^{i+1} - u^{i})$):
\[
F(u)  \leq B \sum_{j=1}^\infty \sqrt{\lambda_j}^{1+ \varepsilon} ||u_j^1-u_j^2||_{W^1_2(G_T)}^2 \sum_{j=1}^\infty \sqrt{\lambda_j}^{1+ \varepsilon}(||u_j^1||_{W^1_2(G_T)}^2 + ||u_j^2||_{W^1_2(G_T)}^2).
\]
Using iteration, we get:
\[
F(u)  \leq  R q^n, \,\, n =1, 2, \cdots .
\]
Hence,
\[
u_j (t,x) = u^1_j (t,x) - u^2_j (t,x)=0 \quad \text{a.e. in } G_T.
\]
 Therefore,  $a_1 (t, x, u^1)\equiv a_2 (t, x, u^2)$ \text{a.e. in} $G_T$. Completeness of the system $\{\sin \sqrt{\lambda_j} y\}$ implies
\[
u(t,x,y) = 0 \quad \text{for almost all}\,\,  (t, x, y) \in Q.
\]
Thus, Theorem \ref{theorem local} is completely proved.

Using Lemma \ref{Final2}, Theorem \ref{theorem global} is proved by repeating similar arguments.

\section{Strong solution}

In this section, we study the existence and uniqueness of strong generalized solutions of the inverse problem \eqref{1}–\eqref{overdetermination}. The strong solution is formulated as
follows:
\begin{опр} Weak solution of the inverse problem \eqref{1}–\eqref{overdetermination} is a strong if
\begin{enumerate}
    \item \( u\in L_2 (0, T; \dot{W}_2^1(D) \cap W_2^2(D))\);
    \item \( D_t u \in L_2(Q)\);
    \item \( a\in L_2(G_T) \)
    \item  Every equation in (\ref{1}) and \eqref{overdetermination} holds a.e. in their corresponding domain.
      \end{enumerate}
\end{опр}
The theorem on local strong solvability has the form
\begin{thm}\label{theorem strong}

Let $\varepsilon=5$ and all the conditions of Theorem \ref{theorem local} are satisfied.
Then, the inverse problem has a unique strong solution. Moreover, the following estimates hold:
$$
||u||^2_{L_2(Q)} + ||\Delta u||^2_{L_2(Q)} +  ||u_t||^2_{L_2(Q)} + ||u_{yy}||^2_{L_2(Q)} \leq K_6,
$$
$$
||a||^2_{L_2(G_T)} \leq K_7,
$$
where $K_j,\, j=6, 7$ are positive constants depending on the data of the problem.

\end{thm}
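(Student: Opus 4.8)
The plan is to show that the weak solution $(u,a)$ furnished by Theorem~\ref{theorem local} (with $\varepsilon=5$) is in fact strong, the only genuinely new task being to establish $\Delta u\in L_2(Q)$; all the remaining regularity is already encoded in the a priori bound $F(u)\le 2R$ of Lemma~\ref{Final1}. Indeed, the choice $\varepsilon=5$ forces $\tau_2=2$, so that $\dot W_2^{\tau_2}(0,\pi)=D(L)$ and the weak estimate $\|u\|^2_{G_T,\tau_2}\le K_2$ reads
\[
\|u_{yy}\|^2_{L_2(Q)}=\tfrac{\pi}{2}\sum_{k=1}^\infty\lambda_k^2\|u_k\|^2_{L_2(G_T)}\le K_2;
\]
likewise $\tau_1=3/2>0$ and $\dot W_2^{\tau_1}\hookrightarrow L_2(0,\pi)$ give $u_t\in L_2(Q)$. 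Thus it only remains to control the second $x$-derivatives.

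First I would pin down the integrability of the nonlinear term $au$, which is the crux. From the representation (\ref{h_final}) and the Cauchy--Schwarz bound used in Lemma~\ref{main} (with $\varepsilon=5$, so $\sqrt{\lambda_j}^{1+\varepsilon}=\lambda_j^{3}$ and $\sum_j\lambda_j^{-3}=\sum_j j^{-6}<\infty$) one has the pointwise estimate $|a(t,x)|\le \Psi_M+C\big(\sum_{j}\lambda_j^{3}\,|u_j(t,x)|^2\big)^{1/2}$. Squaring, integrating over $G_T$, and applying Minkowski's inequality together with the Sobolev embedding $W_2^1(G_T)\hookrightarrow L_4(G_T)$ (valid for $m\le 3$, see (\ref{Sobolev}) and Remark~\ref{m}) yields
\[
\|a\|^2_{L_4(G_T)}\le C\Big(\Psi_M^2+\sum_{j}\lambda_j^{3}\|u_j\|^2_{W_2^1(G_T)}\Big)\le C\big(\Psi_M^2+T\,F(u)\big),
\]
so that $a\in L_4(G_T)$. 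Since $au=\sum_k(au_k)\sin ky$, Hölder's inequality and the same embedding give $\|au\|^2_{L_2(Q)}=\tfrac{\pi}{2}\sum_k\|au_k\|^2_{L_2(G_T)}\le C\|a\|^2_{L_4(G_T)}\sum_k\|u_k\|^2_{L_4(G_T)}<\infty$, the last sum being dominated by $C\,T\,F(u)$ (using $\lambda_k^3\ge1$ and Sobolev). Hence $g_k:=f_k-au_k\in L_2(G_T)$ with $\sum_k\|g_k\|^2_{L_2(G_T)}<\infty$.

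Next I would recover $\Delta u$ mode by mode. The limit coefficients $u_k$ satisfy the parabolic identity (\ref{22}), i.e.\ $u_k$ is the weak solution of $D_t u_k-\Delta u_k+\lambda_k u_k=g_k$ in $G_T$ with $u_k(0)=\varphi_k$; since $g_k\in L_2(G_T)$ and $\varphi_k\in L_2(G)$, the forward theory quoted after (\ref{10}) gives $u_k\in L_2(0,T;W_2^2(G)\cap\dot W_2^1(G))$, and from the equation $\Delta u_k=D_t u_k+\lambda_k u_k-g_k$. Summing the resulting $L_2(G_T)$ bounds,
\[
\sum_k\|\Delta u_k\|^2_{L_2(G_T)}\le C\Big(\sum_k\|D_t u_k\|^2_{L_2(G_T)}+\sum_k\lambda_k^2\|u_k\|^2_{L_2(G_T)}+\sum_k\|g_k\|^2_{L_2(G_T)}\Big),
\]
where the first two sums are dominated by $F(u)\le 2R$ (as $\lambda_k^3\ge1$ and $\lambda_k^2\le\lambda_k^4$, up to the factor $T$) and the third is finite by the previous step; this proves $\Delta u=\sum_k(\Delta u_k)\sin ky\in L_2(Q)$. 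I would then assemble $u\in L_2(0,T;W_2^2(D)\cap\dot W_2^1(D))$ directly from the Fourier series, so as to bypass elliptic regularity on the non-smooth cylinder $D$: the pure $x$-derivatives come from $u_k\in W_2^2(G)$ (smooth boundary $\partial G$), the mixed derivatives from $u_{x_iy}=\sum_k k\,\partial_{x_i}u_k\cos ky$ with $\sum_k k^2\|\partial_{x_i}u_k\|^2_{L_2(G_T)}\le\sum_k\lambda_k^3\|\nabla u_k\|^2_{L_2(G_T)}<\infty$, and $u_{yy}$ as above. With every term of (\ref{1}) now in $L_2(Q)$, the weak identity (\ref{equation}) forces the equation to hold a.e.\ in $Q$, while (\ref{overdetermination}) holds by construction; uniqueness is inherited from Theorem~\ref{theorem local}, and collecting the bounds yields $K_6,K_7$.

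The hard part is the second paragraph: the a priori theory only guarantees $a\in L_2(G_T)$, for which the naive product estimate places $au$ merely in $L_{4/3}(Q)$. What rescues the argument --- and what fixes the value $\varepsilon=5$ --- is that the strong $y$-weights $\lambda_j^{3}$ place $a$ in $L_4(G_T)$, while the Sobolev embedding $W_2^1\hookrightarrow L_4$ (hence the dimensional restriction $m\le 3$) upgrades $au$ to $L_2(Q)$; I would take particular care in justifying the interchange of summation and integration throughout these steps.
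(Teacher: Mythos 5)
Your proposal is correct, but it reaches $\Delta u\in L_2(Q)$ by a genuinely different route than the paper. The paper never post-processes the limit solution: it goes back to the iterates $u_k^i$, substitutes the test function $w=\chi\,\Delta u_k^i$ (with a cutoff $\chi\in C_0^\infty(G)$, $0\le\chi\le1$) into the recurrence \eqref{9}, uses the cancellation $\int_G\nabla u_k^i\cdot\nabla(\chi\Delta u_k^i)\,dx=-\int_G\chi|\Delta u_k^i|^2dx$, estimates the five resulting right-hand integrals by the Cauchy inequality \eqref{Cauchy inequality} with $\delta=5$ (each leaving a $\tfrac1{10}\int\chi|\Delta u_k^i|^2$ that is absorbed), sums over $k$ using the bound $F(u^{i-1})\le 2R$, and finally removes $\chi$ by monotone convergence and lets $i\to\infty$; the $u_t$ and $u_{yy}$ bounds are read off \eqref{u_xu_y} with $\varepsilon=5$ exactly as you do. You instead work directly with the limit: you upgrade $a$ to $L_4(G_T)$ using the $\lambda_j^3$ weights and the Sobolev embedding \eqref{Sobolev}, conclude $au\in L_2(Q)$ by H\"older, and then invoke the linear parabolic regularity quoted after \eqref{10} mode by mode, writing $\Delta u_k=D_tu_k+\lambda_ku_k-g_k$ and summing. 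What the paper's argument buys is self-containedness: it needs no integrability of $a$ beyond $L_2$, no maximal-regularity statement for the mode equations with the unknown right-hand side $f_k-au_k$, and no identification of the weak limit $u_k$ with a strong solution via uniqueness — all the work is elementary energy bookkeeping on objects already known to be smooth enough. What your argument buys is conceptual clarity: it isolates the single genuinely new fact ($au\in L_2(Q)$), explains precisely why $\varepsilon=5$ is the right choice (it is what places $a$ in $L_4(G_T)$), and reuses the bound $F(u)\le 2R$ instead of re-running the iteration; the price is a heavier reliance on the quoted forward theory (which the paper itself invokes only for existence of the iterates, and which, as stated there with merely $\varphi_k\in L_2(G)$, is the weakest link in both arguments). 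Your additional care about the mixed derivatives $u_{x_iy}$, the a.e.\ recovery of the equation from \eqref{equation}, and the inheritance of uniqueness are points the paper leaves implicit, so they strengthen rather than duplicate its proof.
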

\begin{proof}
Let us take a nonnegative function  $\chi(x) \in C_0^\infty (G)$, $\chi \leq 1,$ and substitute \( w = \chi \Delta u_k^i \) into equation \eqref{9}, and then integrate over $t$ from $0$ to $t$, to obtain:
\begin{equation}\label{delta_start}
\int_0^t\int_G D_s u_k^i  \chi \Delta u_k^i \, dx  ds + \int_0^t \int_G \left(\nabla u_k^i \nabla (\chi \Delta u_k^i) + \lambda_k u_k^i \chi \Delta u_k^i \right)dx ds 
\end{equation}
\[
=\int_0^t\left[ \int_G f_k \chi \Delta u_k^i \, dx - \int_G \Psi(t, x) u_k^i \chi \Delta u_k^i \, dx - \int_G \frac{ \sum_{j=1}^\infty  (\sin \sqrt{\lambda_j} y , \omega') u^{i-1}_j(t,x) }{\psi (t,x)}  u_k^{i-1} \chi \Delta u_k^i \, dx\right] ds.
\]
One has
\[
\int_0^t \int_G \nabla u_k^i \nabla (\chi \Delta u_k^i )dx ds = - \int_0^t \int_G  \chi |\Delta u_k^i|^2 dx ds.
\]
Now on the left in (\ref{delta_start}) we leave only this term and we transfer everything remaining to the right side and we denote the five integrals that arise on the right by $I_k^j$, $j= 1, \cdots, 5,$ accordingly. So,
\begin{equation}\label{sum}
\int_0^t \int_G  \chi |\Delta u_k^i|^2 dx ds = \sum_{j=1}^5 I_k^j.
\end{equation}
By virtue of the Cauchy inequality (\ref{Cauchy inequality}) with the constant $\delta =5$, we have 
\[
|I_k^1|:= \left|-\int_0^t\int_G  D_s u_k^i  \chi \Delta u_k^i \, dx  ds\right| \leq \frac{5}{2} ||D_t u_k^i||_{L_2(Q_T)}^2 + \frac{1}{10} \int_0^t \int_G  \chi |\Delta u_k^i|^2 dx ds,
\]
here we applied the inequality $\chi^2 \leq \chi$. In a similar way
\[
|I_k^2|:= \left|- \lambda_k\int_0^t \int_G  u_k^i \chi \Delta u_k^idx ds\right| \leq \frac{5}{2} ||\lambda_k u_k^i||_{L_2(Q_T)}^2 + \frac{1}{10} \int_0^t \int_G  \chi |\Delta u_k^i|^2 dx ds.
\]
We also have
\[
|I_k^3|:= \left|\int_0^t \int_G f_k  \chi \Delta u_k^idx ds\right| \leq \frac{5}{2} ||f_k||_{L_2(Q_T)}^2 + \frac{1}{10} \int_0^t \int_G  \chi |\Delta u_k^i|^2 dx ds,
\]
\[
|I_k^4|:= \left|- \int_0^t \int_G \Psi(t, x) u_k^i \chi \Delta u_k^i \,dx ds\right| \leq \frac{5}{2} \Psi_M^2 ||u^i_k||_{L_2(Q_T)}^2 + \frac{1}{10} \int_0^t \int_G  \chi |\Delta u_k^i|^2 dx ds,
\]
For the last integral $I_k^5$, using similar reasoning as for the estimate $J_k^3$, we will obtain
\[
I_k^5\leq A_\varepsilon^2 C^2_S  \sum_{j=1}^\infty \sqrt{\lambda_j}^{1+ \varepsilon} \sqrt{\lambda_
k}^{1+ \varepsilon}|| u_j^{i-1}||_{W^1_2(G_t)}^2 ||u_k^{i-1}||_{W^1_2(G_t)}^2  +\frac{1}{10} \int_0^t \int_G  \chi |\Delta u_k^i|^2 dx ds,
\]
note that adding $\sqrt{\lambda_
k}^{1+ \varepsilon}$ strengthens the inequality.

Now we sum the equality (\ref{sum}) over all $k$ and use the estimates of $I_k^j$. Then we will get
\[
\sum_{k=1}^\infty\int_0^t \int_G  \chi |\Delta u_k^i|^2 dx ds 
\]
\[
\leq 5\sum_{k=1}^\infty\left[||D_t u_k^i||_{L_2(Q_T)}^2 + ||\lambda_k u_k^i||_{L_2(Q_T)}^2+ ||f_k||_{L_2(Q_T)}^2 +\Psi_M^2 ||u^i_k||_{L_2(Q_T)}^2 \right]
\]
\[
 +  A_\varepsilon^2 C^2_S  \left( \sum_{k=1}^\infty \sqrt{\lambda_k}^{1+ \varepsilon} ||u_k^{i-1}||_{W^1_2(G_t)}^2\right)^2.
\]
As shown above, the right-hand side is bounded by a positive constant $K_4$, depending on the data of the problem. And from the convergence of the right-hand side at $i\to \infty$ follows the convergence of the left-hand side. As a result, we obtain the estimate
\[
\int_0^t \int_D  \chi |\Delta u|^2 dx dy ds \leq K_4.
\]
Since the constant $K_4$ does not depend on $\chi$, then by virtue of the theorem on monotone convergence (see. e.g., \cite{ReedSimon1978}, Theorem I.10) we obtain
\[
||\Delta u||^2_{L_2(Q)} \leq K_4.
\]

From estimate (\ref{u_xu_y}) it follows that $u_t\in L_2(Q_T)$ and if we put $\varepsilon=5$, then $u_{yy}\in L_2(Q_T)$. Consequently, for $\varepsilon=5$ we have
\[
||u_t||^2_{L_2(Q)} + ||u_{yy}||^2_{L_2(Q)} \leq K_5, 
\]
where $K_5$ is a positive constant, depending on the data of the problem.
\end{proof}

If $\varepsilon=5$ and the conditions of Theorem \ref{theorem global} are satisfied, then a similar theorem on global solvability holds.
\section{Conclusions}
 
In this paper, we study the inverse problem of potential recovery for diffusion equations. The elliptic part of the equation is given by $\Delta_x u + u_{yy}$, $x\in \mathbb{R}^m$, $y\in \mathbb{R}^1$, and the unknown function depends on both time and a part of the spatial variables, denoted as $h(t, x)$. It is possible to consider the case $y\in \mathbb{R}^n$, but then some properties of the eigenvalues of the Laplace operator with the Dirichlet condition are needed.

In this case, the overdetermination condition is given in integral form. To our knowledge, such a problem has not been studied for diffusion equations before. The existence and uniqueness of a weak local and global solution of the inverse problem, as well as coercive estimates, are established.

If the data of problem are smoother, then it is possible to prove the unique solvability of the inverse problem in the strong statement. In this case, sufficient conditions are found that guarantee the existence of both a local and a global solution.

If the data of problem are even smoother, then apparently a classical solution also exists. However, this will be the topic of the next article.

\

\begin{center}
ACKNOWLEDGEMENTS    
\end{center}

The authors are grateful to Sh. A. Alimov, and Z. Sabirov for discussions of
these results. The authors also acknowledge financial support from the Ministry of Innovative Development of the Republic of Uzbekistan, Grant No F-FA-2021-424.


\end{document}